\newcommand{\Cstar}{\ensuremath{{\mathrm C}^\ast}}
\newcommand{\cp}{\ell^1(G,A;\alpha)}
\theoremstyle{plain}
\newtheorem{theorem}{Theorem}[section]
\newtheorem{proposition}[theorem]{Proposition}
\newtheorem{lemma}[theorem]{Lemma}
\newtheorem{corollary}[theorem]{Corollary}
\theoremstyle{definition}
\newtheorem{remark}[theorem]{Remark}
\begin{document}

\title[Amenable crossed product Banach algebras]{Amenable crossed product Banach algebras associated with a class of $\boldsymbol{\Cstar}$-dynamical systems}

\author{Marcel de Jeu}
\address{Mathematical Institute,
Leiden University,
P.O. Box 9512,
2300 RA Leiden,
the Netherlands}
\email{mdejeu@math.leidenuniv.nl}

\author{Rachid El Harti}
\address{Department of Mathematics and Computer Sciences, Faculty of Sciences and Techniques,
University Hassan I, BP 577 Settat, Morocco}
\email{rachid.elharti@uhp.ac.ma
}

\author{Paulo R.\ Pinto}
\address{Instituto Superior T\'{e}cnico, University of Lisbon,
Av.\ Rovisco Pais 1, 1049-001 Lisbon, Portugal.}
\email{ppinto@math.tecnico.ulisboa.pt}


\subjclass[2010]{Primary 47L65; Secondary 43A07, 46H25, 46L55}

\keywords{Crossed product Banach algebra, \Cstar-dynamical system, amenable Banach algebra, amenable group}

\begin{abstract}
We prove that the crossed product Banach algebra $\cp$ that is associated with a \Cstar-dynamical system $(A,G,\alpha)$ is amenable if  $G$ is a discrete amenable group and $A$ is a commutative or finite dimensional \Cstar-algebra. Perspectives for further developments are indicated.
\end{abstract}

\maketitle

\section{Introduction}\label{sec:introduction}

A celebrated theorem of Johnson's \cite[Theorem~ 2.5]{johnson} shows that the locally compact Hausdorff topological group $G$ is amenable if and only if the Banach algebra $\mathrm{L}^1(G)$ is amenable. Various definitions of `amenability' for groups are in use, so let us mention explicitly that, in Johnson's definition (see~\cite[p.~32]{johnson}), the amenability of a locally compact Hausdorff topological group $G$ is to be understood as the existence of a left invariant mean on the space of bounded right uniformly continuous complex valued functions on $G$. By e.g.\ \cite[Definition~1.1.4 and Theorem~1.1.9]{runde}, this is equivalent to the existence of a left invariant mean on $\mathrm{L}^\infty(G)$, which is the definition of amenability that is used in the present paper as well as in \cite{pier} (see~\cite[Definition~4.2]{pier}), a source from which we shall use a few results.

According to \cite[Theorem~5.13]{dJMW}, $\mathrm{L}^1(G)$ is an example of a crossed product Banach algebra that can be associated with a general Banach algebra dynamical system $(A,G,\alpha)$ as in \cite[Definition~3.2]{DdJW}. More precisely: it is a member of a family of crossed product Banach algebras that can be associated with the \Cstar-algebra  dynamical system $(\mathbb C,G,\mathrm{triv})$, where the group acts as the identity on the \Cstar-algebra $\mathbb C$. Therefore, if $G$ is amenable, the amenable Banach algebra $\mathrm{L}^1(G)$ is a crossed product of the amenable group $G$ and the amenable \Cstar-algebra $\mathbb C$. Extrapolating this quite a bit, could it perhaps be the case that the `only if'-part of Johnson's theorem is a reflection of an underlying general principle, stating that, under appropriate additional conditions, crossed products of amenable locally compact Hausdorff topological groups and amenable \Cstar-algebras, associated with \Cstar-dynamical systems as in \cite[Definition~3.2]{DdJW}, are always amenable Banach algebras?

This paper is a first investigation into this matter. In support of the existence of such an underlying general principle, we shall establish that the crossed product Banach algebra $\cp$ (to be defined in Section~\ref{sec:main_result}) that is associated with a \Cstar-dynamical system $(A,G,\alpha)$ is amenable if  $G$ is a discrete amenable group and $A$ is a commutative or finite dimensional \Cstar-algebra.

Apart from this result and Johnson's theorem (the latter being also valid for non-discrete $G$), there is, in fact, some additional evidence that such a principle might exist. For this, we recall that the crossed product \Cstar-algebra $A\rtimes_\alpha G$ is amenable for every \Cstar-dynamical system $(A,G,\alpha)$ where $A$ is an amenable \Cstar-algebra and $G$ is an amenable locally compact Hausdorff topological group; see  \cite{gootman}, \cite[Theorem~7.18]{williams}, or \cite[Proposition~14]{green} for a proof of this statement. As we have formulated this result, we have used that amenability as a Banach algebra and nuclearity as a \Cstar-algebra are equivalent for \Cstar-algebras; see \cite{connes} and \cite{haagerup}. This formulation is deliberate, because \cite[Remark~9.4]{DdJW} shows that $A\rtimes_\alpha G$ is, in fact, a crossed product Banach algebra as in \cite[Definition~3.2]{DdJW}. Therefore, a principle as alluded to above could conceivably explain, from a result in Banach algebra theory, why these \Cstar-algebras $A\rtimes_\alpha G$ are amenable Banach algebras (and therefore nuclear \Cstar-algebras).

Let us note, however, that not all Banach algebras that are a crossed product of an amenable \Cstar-algebra and an amenable locally compact Hausdorff topological group as in \cite[Definition~3.2]{DdJW} are amenable. This already fails for the \Cstar-algebra $\mathbb C$ and the group $\mathbb Z$. Indeed, according to \cite[Theorem~5.13]{dJMW}, all Beurling algebras on $\mathbb Z$ figuring in \cite[Theorem~2.4]{badecurtisdales} are crossed product algebras of this type, but, according to the latter result, some of these algebras are amenable, whereas others are not. Hence additional conditions are necessary.

More research is needed to clarify the picture, and we have included a few thoughts on what might be nice to hope for in Section~\ref{sec:perspectives}. As mentioned, in the present paper we are concretely concerned with $\cp$, where $G$ is a discrete amenable group and $A$ is a commutative or finite dimensional \Cstar-algebra. The details for this  case are contained in the next section.

\section{Main result}\label{sec:main_result}

Suppose an action $\alpha: G\to \rm{Aut}(A)$ of the discrete amenable group $G$ as $^\ast$-automorphisms of the commutative or finite dimensional \Cstar-algebra $A$ is given, so that we have a \Cstar-dynamical system $(A,G,\alpha)$. Let us note that, if $A$ is commutative, $A$ is not required to be unital, and that, if $A$ is finite dimensional, $A$ is automatically unital (see \cite[Theorem~III.1.1]{davidson}). The aim is to show that the crossed product Banach algebra $\cp)$, that we are about to introduce, is amenable.

Before doing so, we ought to note, in view of the considerations in the Introduction, that, according to \cite[Theorem~5.13]{dJMW}, $\cp$ is, in fact, a crossed product Banach algebra as in \cite[Definition~3.2]{DdJW}. Furthermore, $G$ is obviously a locally compact Hausdorff topological group, and, according to \cite[p.~352]{wegge-olsen}, the \Cstar-algebras under consideration are amenable. Therefore, the amenability of $\cp$, as asserted in Theorem~\ref{thm:main_result}, fits into the general picture as sketched in the Introduction.

Returning to the main line, let us mention that such an algebra $\cp$ can be defined for every \Cstar-dynamical system $(A, G,\alpha)$ where $G$ is a discrete group, $A$ is an arbitrary \Cstar-algebra, and $\alpha: G\to {\rm Aut}(A)$ is an action of $G$ as $^\ast$-automorphisms of $A$. The definition, which is quite standard, is as follows. Let $\Vert \cdot \Vert $ denote the norm of the \Cstar-algebra $A$, and let
\begin{equation}\label{eq:definition_as_maps}
\cp=\{\,\,\texttt{a}: G\longrightarrow A:\ \Vert \texttt{a}\Vert:=\sum_{g\in G} \Vert a_g\Vert <\infty\,\},
\end{equation}
where, for typographical reasons, we have written $a_g$ for $\texttt{a}(g)$.
We supply $\cp$ with the usual twisted convolution product and involution, defined by
\begin{equation}\label{eq:prod}
(\texttt{a}\texttt{a}^\prime)(g)=\sum_{k\in G} a_k \cdot \alpha_k(a_{k^{-1}g}^\prime)\quad (g\in G, \,\texttt{a},\texttt{a}^\prime\in \cp)
\end{equation}
and
\begin{equation}\label{eq:involution}
\quad a_g^\ast=\alpha_g((a_{g^{-1}})^\ast)\quad (g\in G,\,\texttt{a}\in \cp),
\end{equation}
respectively, so that $\cp$ becomes a Banach algebra with isometric involution. If $A=\mathbb C$, then $\ell^1(G,\mathbb C;\textrm{triv})$ is the usual group algebra $\ell^1(G)$.
If $G=\mathbb Z$ and $A$ is a commutative unital \Cstar-algebra, hence of the form $\textrm{C}(X)$ for a compact Hausdorff space $X$, then there exists a homeomorphism of $X$ such that the $\mathbb Z$-action  is given by $\alpha_n(f)=f\circ \sigma^{-n}$ for all $f\in\textrm{C}(X)$ and $n\in\mathbb Z$.  The algebra $\ell^1(\mathbb Z,\textrm{C}(X); \alpha)$ has been studied in \cite{marcel0}, \cite{JTStudia}, and \cite{marcel1}, and the question whether this algebra is amenable was what originally led to this paper. According to Theorem~\ref{thm:main_result}, the answer is affirmative.

Returning to the case of an arbitrary \Cstar-algebra $A$, let us assume for the moment that $A$ is unital. A convenient way to work with $\cp$ is then provided by the following observations.

For $g\in G$, let $\delta_g: G\to A$ be defined by
$$\delta_g(k)=\left\{
\begin{array}{ll}
1_A & \hbox{if}\ k=g;\\
0_A & \hbox{if}\ k\not=g,
\end{array} \right.$$
where $1_A$ and $0_A$ denote the unit and the zero element of $A$, respectively. Then $\delta_g\in \cp$ and $\Vert \delta_g\Vert =1$ for all $g\in G$. Furthermore, $\cp$ is unital with $\delta_e$ as unit element, where $e$ denotes the identity element of $G$. Using \eqref{eq:prod}, one finds that \begin{equation*}
\delta_{gk}=\delta_{g}\cdot \delta_k \quad
\end{equation*}
for all $g,k\in G$. Hence, for all $g\in G$, $\delta_g$ is invertible in $\cp$, and, in fact, $\delta_g^{-1}=\delta_{g^{-1}}$. It is now obvious that the set $\{\,\delta_g : g\in G\,\}$ consists of norm one elements of $\cp$, and that it is a subgroup of the invertible elements of $\cp$ that is isomorphic to $G$.

In the same vein, it follows easily from \eqref{eq:prod} and \eqref{eq:involution} that we can view $A$ as a closed *-subalgebra of $\cp$, namely as $\{\,a\delta_e: a\in A\,\}$, where $a\delta_e$ is the element of $\cp$ that assumes the value $a\in A$ at $e\in G$, and the value $0_A\in A$ elsewhere.

If $\texttt{a}\in \cp$, then it is easy to see that $\texttt{a}=\sum_{g\in G} (a_g \delta_e)\delta_g$ as an absolutely convergent series in $\cp$. Hence, if we identify $a_g \delta_e$ and $a_g$, we have $\texttt{a}=\sum_{g\in G} a_g \delta_g$ as an absolutely convergent series in $\cp$.

Finally, let us note that an elementary computation, using the identifications just mentioned, shows that the identity
\begin{equation}\label{eq:conjugation}
\delta_g a\delta_g^{-1}=\alpha_g(a)
\end{equation}
holds in $\cp$ for all $g\in G$ and $a\in A$.

Note that $G$ acts on the unitary group of $A$ via $\alpha$, so that the semidirect product $U\rtimes_\alpha G$ can be defined. The following simple observation is the key to the proof of the main result. It exploits the fact that a unital \Cstar-algebra is spanned by its unitaries.

\begin{lemma}\label{lem:key}
Let $(A,G,\alpha)$ be a \Cstar-dynamical system, where $A$ is unital and $G$ is discrete. Let $U$ denote the unitary group of $A$. Then the set $\{\,u\delta_g : u\in U,\,g\in G\,\}$ is a subgroup of the invertible elements of $\cp$ that is canonically isomorphic to the semidirect product $U\rtimes_\alpha G$. This subgroup consists of norm one elements of $\cp$, and its closed linear span is $\cp$.
\end{lemma}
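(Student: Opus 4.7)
My plan is to verify the four claims in order, with the semidirect product multiplication driving the algebraic part and the unitary-spanning property of unital \Cstar-algebras driving the density part.

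First I would use the conjugation identity \eqref{eq:conjugation}, rewritten as $\delta_g a=\alpha_g(a)\delta_g$, to compute
\[
(u\delta_g)(v\delta_h)=u\alpha_g(v)\delta_{gh}
\]
for $u,v\in U$ and $g,h\in G$. Since $\alpha_g$ is a $^\ast$-automorphism it maps $U$ into $U$, so $u\alpha_g(v)\in U$, which shows closure under multiplication. The element $1_A\delta_e$ (identified with $\delta_e$) lies in the set and is a two-sided identity by an earlier remark in the paper. For inverses, a direct computation using the same formula shows that $\alpha_{g^{-1}}(u^\ast)\delta_{g^{-1}}$ is a two-sided inverse of $u\delta_g$, again an element of the set. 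Hence $\{u\delta_g : u\in U,\,g\in G\}$ is a subgroup of the invertible elements of $\cp$.

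Next, the multiplication rule above is exactly the multiplication in the semidirect product $U\rtimes_\alpha G$, in which $(u,g)(v,h)=(u\alpha_g(v),gh)$. The map $(u,g)\mapsto u\delta_g$ is therefore a group homomorphism from $U\rtimes_\alpha G$ onto the subgroup under consideration; it is injective because $u\delta_g$ determines $g$ (the unique $k\in G$ with nonzero value) and $u$ (that value). This is the claimed canonical isomorphism. The norm-one statement is immediate from \eqref{eq:definition_as_maps}, since $\Vert u\delta_g\Vert =\Vert u\Vert =1$ for any unitary $u$ in a \Cstar-algebra.

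For the final claim, I would invoke the standard fact that in a unital \Cstar-algebra every element is a linear combination of (at most four) unitaries, so that $A=\operatorname{span} U$. Given $\texttt{a}\in\cp$, use the absolutely convergent expansion $\texttt{a}=\sum_{g\in G}a_g\delta_g$ noted before the lemma. For each $g$ write $a_g$ as a finite linear combination of unitaries, giving $a_g\delta_g$ as a finite linear combination of elements of the form $u\delta_g$; truncating the sum over $g$ to finite subsets yields a sequence in the linear span of $\{u\delta_g\}$ that converges to $\texttt{a}$ in $\cp$. Hence the closed linear span is all of $\cp$. I do not anticipate a genuine obstacle; the only point requiring a specific external input is the unitary-spanning of a unital \Cstar-algebra, and everything else reduces to the already-recorded multiplication and conjugation identities \eqref{eq:prod} and \eqref{eq:conjugation}.
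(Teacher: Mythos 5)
Your proof is correct and follows essentially the same route as the paper's: the conjugation identity \eqref{eq:conjugation} for the group structure and the semidirect product identification, the fact that every element of a unital \Cstar-algebra is a linear combination of four unitaries, and the series representation $\texttt{a}=\sum_{g\in G}a_g\delta_g$ for density. You simply spell out the details (the multiplication formula, inverses, injectivity, truncation) that the paper leaves as ``clear.''
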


\begin{proof}
It is clear from the above that the given set is a subgroup of the invertible elements, and it follows easily from \eqref{eq:conjugation} that it is canonically isomorphic to $U\rtimes_\alpha G$. The statement on norm one elements is clear. Since every element of $A$ is a linear combination of four unitaries, the linear span of the subgroup contains $\{\, a\delta_g : a\in A,\,g\in G\,\}$. In view of the series representation of elements of $\cp$, the closed linear span of the subgroup is then the whole algebra.
\end{proof}

We shall also need the following.

\begin{lemma}\label{lem:reduction}
Let $(A,G,\alpha)$ be a \Cstar-dynamical system, where $A$ is non-unital and $G$ is discrete. Let $\alpha_1$ denote the unique extension of $\alpha$ to an action of $G$ as $^\ast$-automorphisms of the unitization $A_1$ of $A$. Then $\cp$ is canonically isometrically $^\ast$-isomorphic to a closed two-sided ideal of $\ell^1(G, A_1; \alpha_1)$ that contains a bounded two-sided approximate identity for itself.  Consequently, if $\ell^1(G, A_1; \alpha_1)$ is amenable, then so is $\cp$.
\end{lemma}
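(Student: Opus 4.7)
The plan is to let $\iota\colon\cp\to\ell^1(G,A_1;\alpha_1)$ be the map induced by the isometric inclusion $A\hookrightarrow A_1$ applied pointwise to functions on $G$. Since the $\ell^1$-norm depends only on the pointwise $A$-norms, $\iota$ is a linear isometry, and its image $I$ is precisely the set of elements of $\ell^1(G,A_1;\alpha_1)$ whose values all lie in $A$. Because $A$ is closed in $A_1$, this image is automatically closed.

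Next I would verify that $\iota$ is a $^\ast$-homomorphism onto a two-sided $^\ast$-ideal. Since $\alpha_1$ restricts to $\alpha$ on $A$, the multiplicativity and $^\ast$-preservation of $\iota$ follow at once from the formulas \eqref{eq:prod} and \eqref{eq:involution}. To see that $I$ is a two-sided ideal of $\ell^1(G,A_1;\alpha_1)$, one exploits that $A$ is itself a two-sided ideal of $A_1$: in \eqref{eq:prod}, a factor from $A$ multiplied by an element of $A_1$ (possibly after applying some $\alpha_{1,k}$, which sends $A$ onto $A$) still lies in $A$. The same remark shows that $I$ is $^\ast$-closed.

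For the bounded two-sided approximate identity, I would take a bounded approximate identity $\{e_\lambda\}$ for the \Cstar-algebra $A$ with $\Vert e_\lambda\Vert\le 1$, and set $E_\lambda\in\cp$ to be the function assuming the value $e_\lambda$ at the identity $e\in G$ and zero elsewhere, so that $\Vert E_\lambda\Vert\le 1$. A short calculation from \eqref{eq:prod} gives $(E_\lambda\texttt{a})(g)=e_\lambda a_g$ and $(\texttt{a} E_\lambda)(g)=a_g\alpha_g(e_\lambda)$ for every $\texttt{a}\in\cp$ and $g\in G$. The left estimate $\Vert e_\lambda a_g-a_g\Vert\to 0$ is immediate; for the right one I would use that $\alpha_g$ is an isometric $^\ast$-automorphism of $A$ to write $\Vert a_g\alpha_g(e_\lambda)-a_g\Vert=\Vert\alpha_{g^{-1}}(a_g)e_\lambda-\alpha_{g^{-1}}(a_g)\Vert\to 0$. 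In both cases the $g$-th term is dominated by $2\Vert a_g\Vert$, which is summable over $G$, so dominated convergence on $G$ (with counting measure) yields $\Vert E_\lambda\texttt{a}-\texttt{a}\Vert\to 0$ and $\Vert\texttt{a} E_\lambda-\texttt{a}\Vert\to 0$. Transporting through $\iota$ supplies $I$ with its bounded two-sided approximate identity.

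The final implication is then the standard fact that a closed two-sided ideal with a bounded approximate identity in an amenable Banach algebra is itself amenable (for which \cite{runde} would be an adequate reference). I do not expect a serious obstacle anywhere: the only step requiring mild care is the dominated-convergence argument for the approximate identity, which is routine given the $\ell^1$-summability of $\{\Vert a_g\Vert\}_{g\in G}$.
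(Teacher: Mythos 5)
Your proposal is correct and follows essentially the same route as the paper: identify $\cp$ isometrically with the closed two-sided ideal of $\ell^1(G,A_1;\alpha_1)$ consisting of functions with values in $A$, promote a bounded approximate identity $(e_\lambda)$ of $A$ to $(e_\lambda\delta_e)$ in $\cp$, and invoke the hereditary property of amenability for closed ideals with bounded approximate identities (Runde, Theorem~2.3.7). Your explicit convolution computations and the summability/truncation argument for the approximate identity are exactly the details the paper leaves as ``easily seen.''
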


\begin{proof}
Viewing $A$ as a subset of $A_1$, it is obvious from \eqref{eq:definition_as_maps} and the additional definitions how $\cp$ can be viewed as an involutive Banach subalgebra of $\ell^1(G, A_1; \alpha_1)$. Since $A$ is a closed two-sided ideal of $A_1$, it is then clear from \eqref{eq:prod} that $\cp$ is a closed two-sided ideal of $\ell^1(G, A_1; \alpha_1)$. It can be identified with the set of all absolutely convergent series $\sum_{g\in G} a_g \delta_g$ in $\ell^1(G, A_1; \alpha_1)$ where all $a_g$ are elements of $A$. If $(e_\lambda)_{\lambda\in\Lambda}$ is a bounded two-sided approximate identity for $A$, then, using this series representation, it is easily seen that $(e_\lambda\delta_e)_{\lambda\in\Lambda}\subset \cp$ is a bounded two-sided approximate identity for $\cp$. The final statement therefore follows from \cite[Theorem 2.3.7]{runde}.
\end{proof}

As a final preparation for the proof of the main result, we mention the following cohomological characterization of amenability of a locally compact Hausdorff topological group. It can be found as \cite[Theorem 11.8.(ii)]{pier}; see also \cite[p.~17--18 and p.~99]{pier}. Note that neither the left, nor the right action of the identity element of the group is required to be the identity map.
\begin{proposition}\label{prop:cohomology}
Let $H$ be a locally compact Hausdorff topological group. Then the following are equivalent:
\begin{enumerate}
\item $H$ is amenable.
\item If $E$ is a Banach space that is a $H$-bimodule such that
\begin{enumerate}
\item for each $x\in E$, the maps $h\mapsto h\cdot x$ and $x\mapsto x\cdot h$ are continuous from $H$ into $E$ with its norm topology, and
\item there exists a constant $C$ with the property that $\Vert h\cdot x\Vert \leq C\Vert x\Vert $ and $\Vert x\cdot h\Vert \leq C\Vert x\Vert $ for all $x\in E$ and $h\in H$,
\end{enumerate}  and if $\phi: H\to E^\ast$ is map such that
\begin{enumerate}
\item[(c)] $\phi$ is weak$^\ast$-continuous,
\item[(d)] $\phi(h_1h_2)=h_1\cdot\phi(h_2)+\phi(h_1)\cdot h_2$ for all $h_1,h_2\in H$, and
\item[(e)] $\sup \{\,\Vert \phi(h)\Vert : h\in H\,\}<\infty$,
\end{enumerate}
then there exists $x^\ast\in E^\ast$ such that $\phi(h)=h\cdot x^\ast-x^\ast\cdot h$ for all $h\in H$.
\end{enumerate}
\end{proposition}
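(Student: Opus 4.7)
This is the classical cohomological characterization of amenability, and my approach would mirror the standard argument via an invariant mean, adapted to the fact that $e\in H$ is not required to act as the identity on $E$. For the implication $(1)\Rightarrow(2)$, fix a left invariant mean $m$ on $L^\infty(H)$, and given a cocycle $\phi$ satisfying (c)--(e), build $x^\ast\in E^\ast$ by averaging a suitably twisted pairing: for each $x\in E$, define
$$x^\ast(x):=m_k\bigl\langle\phi(k),\,F(k,x)\bigr\rangle,$$
where $F(k,x)$ is a bimodule twist of $x$ (roughly, involving $k$ or $k^{-1}$ applied to $x$ on one or both sides) chosen so that upon substituting the cocycle identity (d) into the coboundary $h\cdot x^\ast-x^\ast\cdot h$, the resulting expression becomes a left translate of the corresponding function on $H$. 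Boundedness and continuity of the integrand follow directly from (a)--(c) and (e); boundedness of $x^\ast$ follows from (b) and the fact that $m$ has norm one. The required identity $\phi(h)=h\cdot x^\ast-x^\ast\cdot h$ is then extracted from left invariance of $m$ applied to the translated integrand.

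For the reverse implication $(2)\Rightarrow(1)$, the method is to exhibit a universal test pair $(E,\phi)$ whose cobounding element is a left invariant mean. A natural choice is to take $E=L^1(H)$ with the regular representation on one side and the trivial action on the other, so that $E^\ast=L^\infty(H)$; then fix a state $f_0\in L^1(H)$ and set $\phi(h):=h\cdot f_0-f_0$, viewed in $E^\ast$ via the canonical inclusion (or, to handle the locally compact case robustly, in an appropriate quotient bimodule). Hypotheses (a)--(e) are direct to verify. The cobounding functional $x^\ast\in L^\infty(H)^\ast$ supplied by (2), once normalized to a state, is forced by the coboundary equation to be left invariant, giving amenability of $H$. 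An alternative, more conceptual variant of the forward direction that I might prefer to write up is via a Day--Namioka fixed point: the cocycle induces an affine $H$-action on $E^\ast$ whose fixed points are exactly the cobounding elements, and the weak$^\ast$-closed convex hull of an orbit is a weak$^\ast$-compact convex $H$-invariant set by (e) and Banach--Alaoglu, so amenability produces a fixed point.

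\textbf{Main obstacle.} The principal technical complication is the relaxed hypothesis that the identity of $H$ need not act as the identity on $E$; in particular $\phi(e)$ does not automatically vanish, and the usual shortcuts $e\cdot x=x$ and $x\cdot e=x$ are unavailable. All cocycle manipulations must therefore be carried out with the full bimodule action, and the choice of the twist $F(k,x)$ in the forward direction (as well as the precise form of the universal cocycle in the reverse) must be calibrated so that left invariance of $m$ alone suffices, without any hidden simplification from a trivial identity action. Getting the bookkeeping right here, and ensuring the construction in the reverse direction does not degenerate under the weaker module conventions, is the only genuinely delicate step.
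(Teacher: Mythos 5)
The paper itself offers no proof of this proposition: it is quoted from Pier's book (Theorem 11.8(ii) there), so a self-contained argument would go beyond the paper, and your overall plan --- average the cocycle against a left invariant mean for (1)$\Rightarrow$(2), cobound a canonical cocycle on a function-space module for (2)$\Rightarrow$(1) --- is indeed the classical route behind that theorem. But both halves, as written, have genuine gaps. In the forward direction you never exhibit the twist $F(k,x)$, and that is exactly where the stated generality (the identity of $H$ need not act as the identity on $E$) bites: with the natural candidate $\psi(k)=\phi(k)\cdot k^{-1}$, i.e.\ $\langle x,\psi(k)\rangle=\langle k^{-1}\cdot x,\phi(k)\rangle$, the cocycle identity gives $\psi(hk)=h\cdot\psi(k)\cdot h^{-1}+\phi(h)\cdot(kk^{-1})\cdot h^{-1}$, and after averaging one ends up with $x^\ast\cdot h=h\cdot x^\ast\cdot e+\phi(h)\cdot e$ rather than the desired identity; the stray actions of $e$ do not simplify. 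One must first reduce to the ``unital part'' of the bimodule via the commuting idempotents $x\mapsto e\cdot x$ and $x\mapsto x\cdot e$ and correct $\phi$ by explicit coboundaries (using, e.g., $\phi(h)=\phi(he)=h\cdot\phi(e)+\phi(h)\cdot e$), or build such a correction into $F$. Declaring this bookkeeping ``the only genuinely delicate step'' and then not carrying it out leaves (1)$\Rightarrow$(2) unproved at its only nontrivial point; the Day--Namioka variant has the same defect (the affine maps $\xi\mapsto h\cdot\xi\cdot h^{-1}+\psi(h)$ need not define an action of $H$, since $e$ acts nontrivially), plus an unverified continuity hypothesis in the fixed point theorem.

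The reverse direction is set up at the wrong dual level and, as stated, cannot produce a mean. With $E=L^1(H)$ the element $x^\ast$ furnished by (2) lies in $E^\ast=L^\infty(H)$, not in $L^\infty(H)^\ast$ as you assert; a left invariant element of $L^\infty(H)$ is just an (a.e.) constant function and carries no information about amenability. Moreover $h\cdot f_0-f_0$ lies in $L^1(H)$, and there is no canonical inclusion of $L^1(H)$ into $L^\infty(H)$ for general locally compact $H$. The classical repair is to place the prospective mean in the dual of the module: take for $E$ a space of functions on which translation acts norm continuously --- for instance the bounded right uniformly continuous functions modulo the constants; $L^\infty(H)$ itself violates hypothesis (a) --- with translation as one action and the trivial action on the other side, fix a mean $m_0$ on that function space, and let $\phi$ be the coboundary of $m_0$ computed in the dual of the quotient by constants. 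The cobounding element $n$ from (2) then makes $m_0-n$ a translation-invariant functional of total mass one, which must still be converted into a genuine (positive) mean, e.g.\ by passing to its absolute value and normalizing; your phrase ``normalized to a state'' glosses over this positivity step. So the strategy is the right one, but both implications need these concrete repairs before they constitute a proof.
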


We can now establish our main result. We recall that a Banach algebra $B$ is amenable if every bounded derivation $D: B\to E^\ast$ from $B$ into the dual of any Banach $B$-bimodule $E$ is inner, i.e.\ if there exists $x^\ast\in E^\ast$ such that $D(b)=b\cdot x^\ast-x^\ast\cdot b$ for all $b\in B$.

\begin{theorem}\label{thm:main_result}
Let $(A,G,\alpha)$ be a \Cstar-dynamical system where $G$ is a discrete amenable group and $A$ is a commutative or finite dimensional \Cstar-algebra. Then $\cp$ is an amenable Banach algebra.
\end{theorem}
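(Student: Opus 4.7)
The plan is to combine the three preparatory results above. First I would reduce to the unital case by invoking Lemma \ref{lem:reduction}: unitizing preserves commutativity, and finite-dimensional \Cstar-algebras are already unital. Assuming now that $A$ is unital, let $D:\cp\to E^\ast$ be a bounded derivation into the dual of a Banach $\cp$-bimodule $E$, and restrict $D$ to the subgroup $H=\{u\delta_g:u\in U,\,g\in G\}\cong U\rtimes_\alpha G$ supplied by Lemma \ref{lem:key}. Setting $\phi(h):=D(h)$, the derivation identity for $D$ gives $\phi(h_1h_2)=h_1\cdot\phi(h_2)+\phi(h_1)\cdot h_2$ for all $h_1,h_2\in H$, and the fact that elements of $H$ have norm one in $\cp$ yields $\sup_{h\in H}\Vert\phi(h)\Vert\leq\Vert D\Vert$.

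The crux is then to topologize $H$ so that Proposition \ref{prop:cohomology} applies. When $A$ is commutative, $U$ is abelian and hence amenable as a discrete group; giving $U$ the discrete topology makes $H$ into a discrete amenable group (an extension of the discrete amenable group $U$ by the discrete amenable group $G$), and all continuity requirements become vacuous. When $A$ is finite-dimensional, $U$ is a compact group in its norm topology and each $\alpha_g$ is isometric, so $H$ with the corresponding semidirect product topology is a locally compact Hausdorff group, amenable as an extension of the compact (hence amenable) group $U$ by $G$; the bimodule action of $\cp$ on $E$ is norm-continuous and the inclusion $H\hookrightarrow\cp$ is continuous (isometric on each fibre $U\times\{g\}$), so both the bimodule continuity in items (a)--(b) and the weak$^\ast$-continuity in item (c) of Proposition \ref{prop:cohomology} follow. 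In either case that proposition produces $x^\ast\in E^\ast$ with $\phi(h)=h\cdot x^\ast-x^\ast\cdot h$ for every $h\in H$. Since $D$ and the inner derivation $b\mapsto b\cdot x^\ast-x^\ast\cdot b$ are both bounded and linear and agree on $H$, they coincide on its linear span, which is norm-dense in $\cp$ by Lemma \ref{lem:key}; thus they coincide everywhere, and $D$ is inner.

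The main obstacle is picking a topology on $U$ that works uniformly: one cannot use the norm topology in the commutative case (the unitary group of $\mathrm{C}(X)$ with $X$ infinite is not locally compact in norm), nor the discrete topology in the finite-dimensional case (the unitary group of $M_n(\mathbb{C})$ for $n\geq 2$ contains a non-abelian free subgroup and so is not amenable as a discrete group). Splitting the argument along the two hypotheses on $A$ sidesteps this tension, exploiting that abelianness alone yields amenability in any topology, while compactness yields it in the presence of a reasonable one.
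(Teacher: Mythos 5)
Your proposal is correct and follows essentially the same route as the paper's own proof: reduction to unital $A$ via Lemma~\ref{lem:reduction}, restriction of a bounded derivation to the norm-one subgroup $H\cong U\rtimes_\alpha G$ of Lemma~\ref{lem:key}, the case-dependent choice of topology (discrete for commutative $A$, the inherited norm topology for finite dimensional $A$) making $H$ an amenable locally compact group, an application of Proposition~\ref{prop:cohomology}, and density of the linear span of $H$. Your closing remark about why neither topology works uniformly in both cases is a accurate observation that matches the paper's implicit reason for splitting the argument.
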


\begin{proof}
If $A$ is commutative, then Lemma~\ref{lem:reduction} shows that we may assume that $A$ is unital. Since, as we had already mentioned, finite dimensional $\Cstar$-algebras are always unital, we see that it is sufficient to prove the theorem for unital $A$.

Let $U$ denote the unitary group of $A$, and let $H=\{\,u\delta_g : u\in U,\, g\in G\,\}$. According to Lemma~\ref{lem:key}, $H$ and $U\rtimes_\alpha G$ are isomorphic as abstract groups; note that this implies that $H/U$ and $G$ are isomorphic as abstract groups. If $A$ is commutative, we supply $U$ and $H$ with the discrete topologies. If $A$ is finite dimensional, we supply $U$ and $H$ with the topologies that they inherit as subsets of the normed space $\cp$. Then $H$ is an amenable locally compact Hausdorff topological group in both cases:
\begin{itemize}
\item If $A$ is commutative, $H$ is clearly a locally compact Hausdorff topological group. Since $U$ is abelian, it is an amenable locally compact Hausdorff topological group; see e.g.\ \cite[Proposition 12.2]{pier}. Since $U$ and $H/U$ (which is isomorphic to $G$ as a locally compact Hausdorff topological group) are both amenable locally compact Hausdorff topological groups, \cite[Proposition 13.4]{pier} shows that $H$ is an amenable locally compact Hausdorff topological group.
\item  If $A$ is finite dimensional, then $U$ is compact, and it is easy to see that $H$ is a locally compact Hausdorff topological group. Since $U$ is compact, $U$ is an amenable locally compact Hausdorff topological group. As in the case where $A$ is commutative, the combination of the amenability of $G$ and \cite[Proposition 13.4]{pier} shows that $H$ is an amenable locally compact Hausdorff topological group.
\end{itemize}

After these preparations, let $E$ be a Banach $\cp$-bimodule, and let $D: \cp\to E^\ast$ be a bounded derivation. Let $\phi$ denote the restriction of $D$ to $H$. The restrictions of the left and right actions of $\cp$ to $H$ make $E$ and $E^\ast$ into $H$-bimodules, and we want to apply Proposition~\ref{prop:cohomology} to $H$, $\phi$, $E$, and $E^\ast$. We verify the conditions under part~(2) thereof:
\begin{itemize}
\item Condition~(a) is trivially met when $A$ is commutative, since $H$ has the discrete topology in that case. If $A$ is finite dimensional, it is satisfied because in that case $H$ has the topology that it inherits from $\cp$.
\item
Since $\Vert h\Vert =1$ for all $h\in H$, we see from the very definition of a Banach  $\cp$-bimodule that condition~(b) is satisfied.
\item
If $A$ is commutative, then condition (c) is again trivially met. If $A$ is finite dimensional, then $H$ has the topology that it inherits from $\cp$, so that $\phi$ is even continuous when $E^*$ carries its norm topology.
\item
It is evident from the properties of $D$ that condition (d) is satisfied.
\item
Since $\Vert h\Vert =1$ for all $h\in H$, the boundedness of $D$ implies that condition (e) is satisfied.
\end{itemize}

As we had already observed, $H$ is an amenable locally compact Hausdorff topological group, and we can now apply Proposition~\ref{prop:cohomology} to conclude that there exists $x^\ast\in E^\ast$ such that
$$
\phi(h)=h\cdot x^\ast - \ x^\ast\cdot h\quad
$$
for all $h\in H$. Hence $D$ and the inner derivation on $\cp$ that corresponds to $x^\ast$ agree on $H$. By continuity, they also agree on the closed linear span of $H$. Since Lemma~\ref{lem:key} shows that this the whole algebra, we conclude that $\cp$ is amenable.
\end{proof}

\begin{remark}
As a thought experiment, one could attempt to prove along similar lines that the Beurling algebras in \cite[Theorem~2.4]{badecurtisdales} are amenable. The proof must break down for the algebras in  \cite[Theorem~2.4.(ii)]{badecurtisdales}, since these are known not to be amenable. Indeed it does: the corresponding group $H=\{\,z\delta_n: z\in\mathbb T,\,n\in\mathbb Z\,\}$ inside these algebras is then no longer norm bounded, and this implies that the conditions under (b) and (e) in Proposition~\ref{prop:cohomology} are not necessarily satisfied. The fact that these algebras are actually not amenable, implies that at least one of these conditions must be violated for at least one Banach bimodule over these algebras.
\end{remark}

As a by-product, we obtain the amenability of the crossed product \Cstar-algebra $A\rtimes_\alpha G$ that is associated with $(A,G,\alpha)$. Naturally, this is only a very special instance of the general theorem on amenable crossed product \Cstar-algebras discussed in the Introduction, but it is still illustrative how this can be inferred from Theorem~\ref{thm:main_result} and a general principle for amenable Banach algebras, rather than working with nuclearity of \Cstar-algebras.

\begin{corollary}\label{cor:envelope}
Let $(A,G,\alpha)$ be a \Cstar-dynamical system where $G$ is a discrete amenable group and $A$ is a commutative or finite dimensional \Cstar-algebra. Then $A\rtimes_\alpha G$ is an amenable Banach algebra.
\end{corollary}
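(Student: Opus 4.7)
The approach I would take leverages Theorem~\ref{thm:main_result} together with the classical fact that amenability of a Banach algebra is inherited by its continuous homomorphic images with dense range. The plan consists of three short steps.

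First, I would recall the standard construction of the crossed product \Cstar-algebra in order to exhibit a contractive $^\ast$-homomorphism $\pi: \cp\to A\rtimes_\alpha G$ with dense range. By definition, $A\rtimes_\alpha G$ is the completion of the $^\ast$-algebra of finitely supported functions $G\to A$ with respect to the universal \Cstar-norm arising from covariant representations of $(A,G,\alpha)$. Since every covariant representation restricts to a contractive $^\ast$-representation of $\cp$ (because the integrated form is bounded by the $\ell^1$-norm), this universal \Cstar-seminorm is dominated by $\Vert\cdot\Vert$ on the finitely supported functions and extends by continuity to all of $\cp$. The resulting map $\pi$ is a contractive $^\ast$-homomorphism whose range contains the dense subalgebra of finitely supported functions.

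Second, Theorem~\ref{thm:main_result} tells us that $\cp$ is an amenable Banach algebra under the stated hypotheses on $A$ and $G$. Third, I would invoke the standard preservation result that, if $B$ is an amenable Banach algebra and $\pi: B\to C$ is a continuous algebra homomorphism with dense range, then $C$ is amenable (see e.g.\ \cite[Proposition 2.3.1]{runde}). The underlying mechanism is transparent: given a bounded derivation $D: C\to E^\ast$ into the dual of a Banach $C$-bimodule $E$, one pulls the bimodule structure back along $\pi$, obtains a bounded derivation $D\circ\pi: B\to E^\ast$, produces $x^\ast\in E^\ast$ implementing $D\circ\pi$ as an inner derivation by amenability of $B$, and then uses continuity of $D$ together with density of $\pi(B)$ in $C$ to conclude that the same $x^\ast$ implements $D$.

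There is no genuine obstacle here; the corollary is essentially a one-line consequence of the main theorem and the quoted preservation property. The only point that warrants a sentence of attention is the existence and density of $\pi$, and this is already built into the Banach algebraic description of $A\rtimes_\alpha G$ indicated in the Introduction (cf.\ \cite[Remark 9.4]{DdJW}).
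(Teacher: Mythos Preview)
Your proposal is correct and follows essentially the same route as the paper: exhibit the canonical contractive homomorphism from $\cp$ into its enveloping \Cstar-algebra $A\rtimes_\alpha G$ with dense range, invoke Theorem~\ref{thm:main_result}, and apply \cite[Proposition~2.3.1]{runde}. The paper's proof is slightly terser, but the argument and the cited preservation result are identical.
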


\begin{proof}
The \Cstar-algebra $A\rtimes_\alpha G$ is the enveloping \Cstar-algebra of the involutive Banach algebra $\cp$. By the very construction of such an enveloping \Cstar-algebra, there is a continuous (even: contractive) homomorphism of $\cp$ into $A\rtimes_\alpha G$ with dense range. It is a general principle for Banach algebras (see \cite[Proposition~2.3.1]{runde}) that the amenability of $\cp$, as asserted in Theorem~\ref{thm:main_result}, then implies that $A\rtimes_\alpha G$ is also amenable.
\end{proof}

\begin{remark}
The argument in the proof of Corollary~\ref{cor:envelope} also shows that Johnson's theorem on the amenability of $\mathrm{L}^1(G)$ implies that the group \Cstar-algebra $\Cstar(G)$ of an amenable locally compact Hausdorff topological group is amenable.
\end{remark}

\section{Perspectives}\label{sec:perspectives}

In this section, we give a few thoughts on further developments.

The first issue that we discuss is the motivation for the restriction to the case of discrete $G$ and commutative or finite dimensional $A$ in Theorem~\ref{thm:main_result}. The statement that the twisted convolution algebra $\textup{L}^1(G,A;\alpha)$ is an amenable Banach algebra makes sense for any \Cstar-dynamical system $(A,G,\alpha)$ where $G$ is an arbitrary locally compact Hausdorff topological group $G$ and $A$ is an arbitrary amenable \Cstar-algebra, and may, so we think, well be true. The reason that this more general problem is not taken up in the present paper is the following. Under the restrictive assumptions as described, the group $H\simeq U\rtimes_\alpha G$ in the proof of Theorem~\ref{thm:main_result} is relatively easily seen to be an amenable locally compact Hausdorff topological group, and the closed linear span of $H$ equals $\cp$.  Both these facts are instrumental in the proof of Theorem~\ref{thm:main_result}. In a more general context, this convenient setting is no longer present.  One can then not even see $G$ as a subgroup of $\textup{L}^1(G,A;\alpha)$, let alone work with an analogue of $H$ as a subset of $\textup{L}^1(G,A;\alpha)$. The whole structure of the proof of Theorem~\ref{thm:main_result} is simply no longer applicable.

Yet not all need be lost, and we shall now indicate why we think that hope for more is still justified. This will also make clear why we have restricted the validity of a presumed general principle, as mentioned in the Introduction, to the case where $A$ is, in fact, an amenable \Cstar-algebra, and not an arbitrary amenable Banach algebra.  For this, we recall from \cite[Theorem~2]{patersonpaper} that, if $A$ is a unital \Cstar-algebra with unitary group $U$, then $U$ is a topological group when supplied with the inherited weak topology of the Banach space $A$. Moreover, $U$ is amenable (in our sense that there exists a left invariant mean on the space of bounded right uniformly continuous complex functions on $U$) precisely when $A$ is amenable.  Furthermore, we see from \cite[Proposition~6.4]{DdJW} that there are natural homomorphisms from $G$ and $A$ into the left centralizer algebra $\mathcal M_{\textup l}(\textup{L}^1(G,A;\alpha))$ of $\textup{L}^1(G,A;\alpha)$.  The existence of these homomorphisms and their properties imply that $\mathcal M_{\textup l}(\textup{L}^1(G,A;\alpha))$ contains a group that is a homomorphic image of $U\rtimes_\alpha G$.  With these ingredients, one could now attempt to incorporate modifications of the ideas in the proof of Theorem~\ref{thm:main_result} into Johnson's proof of the amenability of $\textup L^1(G)$ for general amenable $G$\textemdash which proceeds via the left centralizer algebra of $\textup{L}^1(G)$)\textemdash and tackle the case of general $G$, $A$, and $\alpha$.  Needless to say, if such an approach is feasible, this will be considerably more technically demanding than the proof of Theorem~\ref{thm:main_result}.

The second issue that naturally comes to mind is the analogue of the `if'-part of Johnson's theorem. This part is due to Ringrose: if $\textup{L}^1(G)$ is amenable, then so is $G$; see \cite[Theorem~2.5]{johnson}.  We currently have no results in this direction, but one would hope that, for a \Cstar-dynamical system $(A,G,\alpha)$, where $G$ is an arbitrary locally compact Hausdorff topological group and $A$ is an arbitrary \Cstar-algebra, the amenability of $\textup{L}^1(G,A;\alpha)$ implies the amenability of both $G$ and $A$.

Ideally, then, the amenability of $\textup{L}^1(G,A;\alpha)$ would be equivalent to the amenability of both $G$ and $A$. More generally, for a \Cstar-dynamical system $(A,G,\alpha)$ and a non-empty uniformly bounded class $\mathcal R$ of continuous covariant representation thereof (see \cite[Definition~3.1]{DdJW} for this notion), one could hope that, under suitable assumptions on $\mathcal R$,  the amenability of the crossed product Banach algebra $(A\rtimes_\alpha G)^{\mathcal R}$ (see \cite[Definition~3.2]{DdJW} for its definition) is equivalent to the amenability of both $G$ and $A$. Whereas this aesthetically pleasing general statement is admittedly perhaps already in the realm of speculation, it seems well worth investigating these matters more closely, since, no matter the outcome, this will lead to a better understanding of the relation between amenability and the crossed product construction. We hope to be able to report on this in the future.

\subsection*{Acknowledgements}
We thank Jun Tomiyama for helpful comments.
The last author was partially funded by FCT/Portugal through project UID/MAT/04459/2013.


\begin{thebibliography}{99}

\bibitem{badecurtisdales}
W.G.\ Bade, P.C.\ Curtis Jr., H.G.\ Dales, \textit{Amenability and weak amenability for Beurling and Lipschitz algebras}, Proc.\ London Math.\ Soc.\ (3) {\bf 55} (1987), 359--377.

\bibitem{connes}
A.\ Connes, \textit{On the cohomology of operator algebras}, J.\ Funct.\ Anal.\ {\bf 28} (1978), 248--253.

\bibitem{davidson}
K.R.\ Davidson, \textit{\Cstar-algebras by example}, Fields Institute Monographs {\bf 6}, American Mathematical Society, Providence, R.I., 1996.

\bibitem{DdJW} S.\ Dirksen, M.\ de Jeu, M.\ Wortel, \emph{Crossed products
of Banach algebras.\ I}, 2011, to appear in Dissertationes Math., arXiv:1104.5151.

\bibitem{gootman}
E.C.\ Gootman, \textit{On certain properties of crossed products}, in `Operator algebras and applications, Part 1' (Kingston, Ont., 1980), Proc.\ Sympos.\ Pure Math.\ {\bf 38}, 311--321, American Mathematical Society, Providence, R.I., 1982.


\bibitem{green}
P.\ Green, \textit{\Cstar-algebras of transformation groups with smooth orbit space}, Pacific J.\ Math. {\bf 72} (1977), 71--97.

\bibitem{haagerup}
U.\ Haagerup,
\textit{All nuclear \Cstar-algebras are amenable}, Invent.\ Math.\ {\bf 74}, 305--319, (1983).

\bibitem{dJMW}
M.\ de Jeu, M.\ Messerschmidt, M.\ Wortel, \emph{Crossed
products of Banach algebras.\ II}, 2013, to appear in Dissertationes Math., arXiv:1305.2304.

\bibitem{marcel0}
M.\ de Jeu, C.\ Svensson, J.\ Tomiyama, \textit{On the Banach {$^\ast$}-algebra crossed product associated  with a topological dynamical system},  J.\ Funct.\ Anal.\ {\bf 262} (2012), 4746--4765.

\bibitem {JTStudia} M.\ de Jeu, J.\ Tomiyama, \emph{Maximal abelian subalgebras and projections in two Banach algebras associated with a topological dynamical system}, Studia Math. {\bf 208} (2012), 47--75.

\bibitem{marcel1}
 M.\ de Jeu, J.\ Tomiyama, \textit{Algebraically irreducible representations and structure space of the Banach algebra associated with a topological dynamical system}, Adv.\ Math.\ {\bf 301} (2016), 79--115.

\bibitem{johnson}
B.E.\ Johnson, \textit{Cohomology in Banach algebras}, Mem.\ Amer.\ Math.\ Soc.\ {\bf 127}, (1972).

\bibitem{patersonpaper}
A.L.T.\ Paterson, \textit{Nuclear \Cstar-algebras have amenable unitary groups}, Proc.\ Amer.\ Math.\ Soc.\ {\bf 114} (1992), 719--721.


\bibitem{pier}
J.-P.\ Pier, \textit{Amenable locally compact groups}, Wiley-Interscience, New York, 1984.

\bibitem{runde}
V.\ Runde, \textit{Lectures on amenability}, Lectures Notes in Mathematics {\bf 1774}, Springer-Verlag, Berlin, 2002.

\bibitem{wegge-olsen}

N.E.\ Wegge-Olsen, \textit{$\mathrm{K}$-theory and \Cstar-algebras. A friendly approach}, Oxford University Press, New York, 1993.

\bibitem{williams}
D.P.\ Williams, \textit{Crossed products of \Cstar-algebras}, Mathematical Surveys and Monographs {\bf 134}, Amer.\ Math.\ Soc., Providence, R.I., 2007.

\end{thebibliography}
\end{document}